\newtheorem{proposition}{Proposition}
\renewcommand\nomgroup[1]{
\item[\itshape
\ifstrequal{#1}{A}{Sets and indices:}{
\ifstrequal{#1}{B}{Data:}{
\ifstrequal{#1}{C}{Variables:}{}}}
]}
\begin{document}
	
	\title{Variable aggregation-based formulations for pumped storage hydro model in the day-ahead unit commitment problem}
	
	\author{ Shaoze Li, Junhao Wu, and Zhibin Deng
		\thanks{ S. Li and Z. Deng is with the School of Economics and Management, University of Chinese Academy of Sciences; MOE Social Science Laboratory of Digital Economic Forecasts and Policy Simulation at UCAS, 
			Beijing 100190, China (e-mail: shaoze\_li@163.com, zhibindeng@ucas.edu.cn) J. Wu is with the School of Economics and Management, North China Electric Power University, Beijing, 102206, China
			(e-mail: junhao\_wu@163.com)..
		}
		
	}
	
	
	
\maketitle

\begin{abstract}
Pumped storage hydro (PSH) plants can improve the flexibility of power systems. A well-designed formulation for a PSH model is essential when incorporating the PSH units into a day-ahead unit commitment model. In the literature, the formulation of a PSH model is generally based on the individual PSH unit. This formulation is tight if there is only one PSH unit in the reservoir. However, when there are multiple units sharing the same reservoir in a PSH plant, the existing formulation may introduce some symmetric structures which degrade the efficiency of a mixed-integer programming solver significantly. In this paper, to cope with the symmetric structure in the PSH plants that have multiple units, we propose two new formulations. The first formulation considers the case in which there are multiple identical units sharing the same reservoir. The second formulation considers a general case in which the units sharing the same reservoir have the same generating and pumping efficiency. Using the new formulations, the symmetric structures in the problem can be effectively broken. Numerical results are presented to study the computational efficiency of the new formulations.
%
%
\end{abstract}
	
\begin{IEEEkeywords}
	Pumped storage hydro, mixed-integer programming, unit commitment.
\end{IEEEkeywords}

\nomenclature[A]{$t\in\mathcal{T}$}{set of periods}
\nomenclature[A]{$r\in\mathcal{R}$}{set of reservoirs}
\nomenclature[A]{$g\in\mathcal{G}_{r}$}{set of pumped storage hydro units in reservoir $r$}
\nomenclature[A]{$g\in\mathcal{G}$}{set of the rest of the generating units (other than the pumped storage hydro units) in a system}

\nomenclature[B]{$\alpha_{g}$}{generating efficiency of the PSH unit $g$}
\nomenclature[B]{$\beta_{g}$}{pumping efficiency of the PSH unit $g$}
\nomenclature[B]{$\underline{p}^{\textrm{gen}}_{g}$}{minimum  generation power of PSH unit $g$ [MW]}
\nomenclature[B]{$\bar{p}^{\textrm{gen}}_{g}$}{maximum generation power of PSH unit $g$ [MW]}
\nomenclature[B]{$\underline{p}^{\textrm{pump}}_{g}$}{minimum pumping power of PSH unit $g$ [MW]}
\nomenclature[B]{$\bar{p}^{\textrm{pump}}_{g}$}{maximum pumping power of PSH unit $g$ [MW]}
\nomenclature[B]{$\underline{S}_{r}$}{minimum  energy level of the reservoir $r$ [MWh]}
\nomenclature[B]{$\bar{S}_{r}$}{maximum energy level of the reservoir $r$ [MWh]}
\nomenclature[B]{$D_{t}$}{system net load at period $t$ [MW];}

\nomenclature[B]{$\Delta_g^{+}$}{the ramp-up limit for unit $g$ [MW];}
\nomenclature[B]{$\Delta_g^{-}$}{the ramp-down limit for unit $g$ [MW];}

\nomenclature[B]{$\Lambda_g^{+}$}{the start-up limit for unit $g$ [MW];}
\nomenclature[B]{$\Lambda_g^{-}$}{the shut-down limit for unit $g$ [MW];}

\nomenclature[B]{$\tau_g^{+}$}{the minimum number of consecutive time periods that unit $g\in\mathcal{G}$ has to be in the on state;}
\nomenclature[B]{$\tau_g^{-}$}{the minimum number of consecutive time periods that unit $g\in\mathcal{G}$ has to be in the off state}

\nomenclature[B]{$\Lambda_g^{-}$}{the shut-down limit for unit $g$ [MW];}

\nomenclature[C]{$u^{\textrm{gen}}_{g,t}$}{binary commitment variable that represents whether the unit $g$ is generating or not during time period $t$}
\nomenclature[C]{$u^{\textrm{pump}}_{g,t}$}{binary commitment variable that represents whether the unit $g$ is pumping or not during time period $t$}
\nomenclature[C]{$u^{\textrm{off}}_{g,t}$}{binary commitment variable that represents whether the unit $g$ is switched-off or not during time period $t$}
\nomenclature[C]{$p^{\textrm{gen}}_{g,t}$}{amount of generation at a PSH unit $g$ during time period $t$ [MW]}
\nomenclature[C]{$p^{\textrm{pump}}_{g,t}$}{amount of pumping at a PSH unit $g$ during time period $t$ [MW]}
\nomenclature[C]{$s_{r,t}$}{energy stored in the reservoir $r$ at the end of period $t$ [MWh]}
\nomenclature[C]{$w^{\textrm{pump}}_{r,t}$}{binary variable that represents whether the reservoir $r$ is in pumping mode or not during time period $t$}
\nomenclature[C]{$w^{\textrm{gen}}_{r,t}$}{binary variable that represents whether the reservoir $r$ is in generating mode or not during time period $t$}

\nomenclature[C]{$x_{g,t}$}{the commitment variable of unit $g\in\mathcal{G}$ during time period $t$}
\nomenclature[C]{$v_{g,t}$}{start-up variable which denoting if unit $g\in\mathcal{G}$ has been started-up at time period $t$}
\nomenclature[C]{$w_{g,t}$}{shut-down variable which denoting if unit $g\in\mathcal{G}$ has been shut-down at time period $t$}
\nomenclature[C]{$q_{g,t}$}{amount of generation at unit $g\in\mathcal{G}$ during time period $t$ [MW]}

\printnomenclature

\section{Introduction}\label{sec1}

Pumped storage hydro (PSH) plants can provide flexibility to enhance the reliability of modern power systems. It can absorb excess electricity when the net load is low and generate electricity when it is needed. With increasing penetration of renewable energy, the PSH plant plays an important role to address uncertainties in modern power system \cite{Roberts}.

The PSH plant has a state of charge (SOC). It can pump to increase its SOC, and generate power to decrease its SOC. From a computational perspective, the main challenge of modelling a PSH plant comes from the combination of the mutually exclusive charging and discharging modes, together with SOC limits. To address this issue, a mixed-integer formulation is generally needed. The key question is how to derive a formulation to model a PSH plant, so that the formulation can bring benefit to the computational efficiency of a solver.

In the literature, various formulations for a PSH plant have been proposed and have been integrated into many models, such as the profit maximization models \cite{Castronuovo2004a,Duque}, energy storage sizing models \cite{Korpaas,Castronuovo2004b,Brown}, bidding and scheduling models \cite{Ni,Gonzalez,Li2005}, and so on.
However, the computational efficiency of incorporating a PSH plant in an unit commitment problem is not the major concern of the aforementioned papers. Some regional transmission organizations and independent system operators have optimized pumped storage hydro in the day ahead market \cite{Giacomoni}.
In \cite{Jiang}, a robust unit commitment problem with wind power and a PSH unit is studied.  In \cite{Khodayar} and \cite{Li2016}, the scheduling of PSH units are modeled in a day-ahead market. In \cite{Huang2022}, a mixed-integer formulation of PSH plant is proposed and this formulation is integrated into a unit-commitment problem for the day-ahead market. In order to improve the computational efficiency of the mixed-integer formulation, \cite{Baldick} derived some new valid inequalities to tighten the formulation proposed in \cite{Huang2022}.

The formulations in \cite{Huang2022} and \cite{Baldick} are all based on individual PSH unit. In detail, for each PSH unit, a set of binary variables and a set of continuous variables are introduced to represent the operation modes and the amount of load/generation in each time period. This formulation is compact for a PSH plant that has one unit. However, in real-life applications, it is common that there are multiple PSH units sharing one reservoir. If some of the PSH units in the same reservoir are identical (in other word, the parameters of these PSH units are the same),  then the individual PSH unit based mixed-integer formulations will have certain types of symmetric structures, which may lead to significant degradation to the computational efficiency of a solver such as Gurobi. Our numerical results will show that even when there are as few as three identical PSH units sharing one reservoir, the performance of Gurobi will deteriorate significantly if the individual PSH unit based formulations in \cite{Huang2022} and \cite{Baldick} are applied.

The main concern of this paper is to design new formulations of a PSH plant that can break the symmetric structures in the problem.
In the literature, various techniques have been proposed to break the symmetric structure. For example, the most straightforward method is to define the representative solution among different symmetric solutions, and add some cuts in the mixed-integer programming problems to cut-off other non-representative symmetric solutions, while keeping the representative solutions feasible \cite{Friedman,Liberti2012,Liberti2014}. Besides, an advanced branching rule called as orbital branching rule, has been proposed, which only enumerates the representative solutions in the branching process \cite{Ostrowski2011,Ostrowski2015}. For the unit commitment problems with identical units, an aggregated variables based method is proposed to break the symmetric structure of the solution space \cite{Knueven2017}.

In this paper, we consider two types of symmetric structures in PSH units. One is called as full symmetric structure, in which all the PSH units in one reservoir are identical. The other is called as partial symmetric structure, in which only the generating and pumping efficiency of the PSH units sharing one reservoir are identical. To handle the two types of symmetric structures, we propose two formulations, one is based on variable aggregation, and the other is based on problem presolving. Our numerical results will show the effectiveness of the proposed formulations on handling the symmetric structures.

The remaining parts of this paper are arranged as follows: Section \ref{sec2} reviews a formulation based on the individual PSH unit. Section \ref{sec3} derives two new formulations. One is derived by using variable aggregation, which is designed for breaking the symmetric structures in the cases where the PSH units sharing the same reservoir are identical. The other is derived via a presolving step, which is designed for the cases that the PSH units in the same reservoir are not identical, but have the same generating and pumping efficiency. Some numerical results on studying the value of different PSH formulations to the computational efficiency of a mixed-integer programming solver are presented in Section \ref{sec4}.

\section{The Standard Formulations}\label{sec2}
In this section, we introduce the PSH formulation introduced in \cite{Huang2022} and \cite{Baldick}. This formulation will be referred to as the standard formulation in this paper.

In the standard formulation, for each unit $g\in \mathcal{G}_r$, we use $p^{\textrm{gen}}_{g,t}$ and $p^{\textrm{pump}}_{g,t}$ to represent the amount of generation and the amount of pumping load of unit $g$ during period $t$, respectively, and use binary variables $u^{\textrm{gen}}_{g,t}$, $u^{\textrm{pump}}_{g,t}$ and $u^{\textrm{off}}_{g,t}$ to represent the three operation modes of a unit, namely generating, pumping and off-line, respectively.

For each unit $g$ in each period $t$, the operation modes are mutually exclusive. This is represented by
\begin{equation}\label{eq1}
u^{\textrm{gen}}_{g,t}+u^{\textrm{pump}}_{g,t}+u^{\textrm{off}}_{g,t}=1,~\forall g\in \mathcal{G}_r,t\in\mathcal{T}.
\end{equation}
The amount of pumping load and the amount of generation in each period are constrained by their lower and upper bounds. These are represented by the following constraints:
\begin{equation}\label{eq2}
\begin{aligned}
&\underline{p}^{\textrm{gen}}_g u^{\textrm{gen}}_{g,t} \leq p^{\textrm{gen}}_{g,t}\leq \bar{p}^{\textrm{gen}}_g u^{\textrm{gen}}_{g,t},~& g\in \mathcal{G}_r,t\in\mathcal{T},\\
&\underline{p}^{\textrm{pump}}_g u^{\textrm{pump}}_{g,t}\leq p^{\textrm{pump}}_{g,t}\leq \bar{p}^{\textrm{pump}}_g u^{\textrm{pump}}_{g,t},~& g\in \mathcal{G}_r,t\in\mathcal{T}.
\end{aligned}
\end{equation}

For a typical PSH plant, it is general that there are multiple PSH units installed in one reservoir.
When there are more than one PSH units sharing the same reservoir, it is not economical and physically feasible to have some units pumping while other units generating simultaneously. To impose these mutual exclusivity constraints in reservoir $r$ at the plant level, two binary variables $w^{\textrm{pump}}_{r,t}$ and $w^{\textrm{gen}}_{r,t}$ are introduced for each period $t\in \mathcal{T}$, which represent the operation status of the plant as pumping and generating at period $t$, respectively. Then, the following constraints are introduced:
\begin{equation}\label{eq3}
w^{\textrm{pump}}_{r,t}+w^{\textrm{gen}}_{r,t}\leq 1,~t\in\mathcal{T}.
\end{equation}
The operation mode of each unit in reservoir $r$ is constrained by the operation status of the plant as follows:
\begin{equation}\label{eq4}
\begin{aligned}
&u_{g,t}^{\textrm{pump}}\leq w_{r,t}^{\textrm{pump}},~&g\in \mathcal{G}_r,t\in\mathcal{T},\\
&u_{g,t}^{\textrm{gen}}\leq w_{r,t}^{\textrm{gen}},~&g\in \mathcal{G}_r,t\in\mathcal{T}.
\end{aligned}
\end{equation}

For a reservoir $r\in \mathcal{R}$, the SOC variable $s_{r,t}$ is introduced for each time period $t\in \mathcal{T}$. The SOC constraints are introduced as follows:
\begin{equation}\label{eq5}
s_{r,t}=s_{r,t-1}+\sum_{g\in \mathcal{G}_r} \alpha_g p^{\textrm{pump}}_{g,t-1}-\sum_{g\in \mathcal{G}_r} \beta_g p^{\textrm{gen}}_{g,t-1},~t\in \mathcal{T},
\end{equation}
where $s_{r,0}$ is fixed to a constant $S_{r,0}$, which represents a known initial condition. Besides, in some formulations, there may be a specified level $S_{r,T}$ for the final state $s_{r,T}$. The constraints on the initial and final conditions are given as follows:
\begin{equation}\label{eq6}
\begin{aligned}
&s_{r,0}=S_{r,0},\\
&s_{r,T}=S_{r,T}.
\end{aligned}
\end{equation}
There is a maximum allowable SOC level $\bar{S}_r$ and a minimum allowable SOC level $\underline{S}_r$. Hence the value of $s_{r,t}$ is bounded as follows:
\begin{equation}\label{eq7}
\underline{S}_r\leq s_{r,t}\leq \bar{S}_r,~t\in \mathcal{T}.
\end{equation}
Moreover, as discussed in  \cite{Baldick}, based on the SOC constraints (5) and the mutual exclusivity constraints \eqref{eq3} and \eqref{eq4}, the constraints in \eqref{eq7} can be tightened as follows:
\begin{equation}\label{eq8}
\begin{aligned}
&s_{r,t-1}+\sum_{g\in \mathcal{G}_r} \alpha_g p^{\textrm{pump}}_{g,t-1}\leq \bar{S}_r, &t\in \mathcal{T},\\
&s_{r,t-1}-\sum_{g\in \mathcal{G}_r} \beta_g p^{\textrm{gen}}_{g,t-1}\geq \underline{S}_r, &t\in \mathcal{T}.
\end{aligned}
\end{equation}
Given the constraints in \eqref{eq8}, the constraints in \eqref{eq7} are redundant and can be dropped. The effects of using \eqref{eq8} to replace \eqref{eq7} have been analyzed in details in Section 3.3 of  \cite{Baldick}.

Now we give the complete standard PSH formulation. For simplicity on notations, we use vectors $\textbf{u}_r$, $\textbf{p}_r$, $\textbf{s}_r$ and $\textbf{w}_r$ to denote the collections of variables that represent the operation modes, the amount of generation and the amount of pumping load, the state-of-charge, and the operation status of the plant, respectively:
\begin{equation}\label{eq9}
\begin{aligned}
&\textbf{u}_r=\left\{u^{\textrm{gen}}_{g,t},u^{\textrm{pump}}_{g,t},u^{\textrm{off}}_{g,t}\right\}_{g\in \mathcal{G}_r,t\in \mathcal{T}},\\
&\textbf{p}_r=\left\{p^{\textrm{gen}}_{g,t},p^{\textrm{pump}}_{g,t}\right\}_{g\in \mathcal{G}_r,t\in \mathcal{T}},\\
&\textbf{s}_r=\left\{s^{\textrm{gen}}_{r,t}\right\}_{t\in \mathcal{T}},\\
&\textbf{w}_r=\left\{w^{\textrm{gen}}_{r,t}\right\}_{t\in \mathcal{T}}.
\end{aligned}
\end{equation}
Then, the standard PSH formulation for reservoir $r$ is given as follows:
\begin{equation}\label{eq10}
\mathcal{S}_{r}=\left\{ (\textbf{u}_r,\textbf{p}_r,\textbf{s}_r,\textbf{w}_r) \,\left|\,
\begin{array}{@{}lll}
& \eqref{eq1}-\eqref{eq6},~\textrm{and}~\eqref{eq8}\\[3pt]
& \textbf{u}_r\in \{0,1\}^{3T\times|\mathcal{G}_r|} \\[3pt]
&\textbf{w}_r\in \{0,1\}^{2T}
\end{array}\right.\right\}.
\end{equation}

The formulation of a PSH plant can be incorporated into a unit-commitment problem.
For each generator $g\in \mathcal{G}$ other than the PSH units, we use a binary vector $\textbf{u}_{g}$ to denote the collection of variables that represent the operation status of the unit, and a continuous vector $\textbf{q}_{g}$ to denote the collection of variables that represent the amount of generation over each time period. The constraints on the vectors $\textbf{u}_{g}$ and $\textbf{q}_{g}$, such as ramping constraints, start-up and shut-down time constraints, minimum up/down time constraints and some other constraints are denoted as $(\textbf{u}_{g},\textbf{q}_{g})\in \mathcal{X}_g$. In the literature, various formulations for the unit-commitment problem have been proposed \cite{Ostrowski2012,Espana2013,Gentile,Espana2015,Knueven,Bacci,Rajan,Reddy,Shiina}.  We adopt the 3-bin formulation in \cite{Rajan} to represent the set $\mathcal{X}_g$. Following the formulation in \cite{Huang2022}, the unit commitment problem with PSH units being incorporated can be formulated as follows:
\begin{equation}\label{eq11}
\begin{aligned}
\min~&\sum_{g\in\mathcal{G}} C(\textbf{u}_{g},\textbf{q}_{g})\\
\textrm{s.t.}~&\sum_{g\in\mathcal{G}} q_{g,t}+ \sum_{r\in\mathcal{R}}\sum_{g\in\mathcal{G}_r} (p^{\textrm{gen}}_{g,t}- p^{\textrm{pump}}_{g,t})=D_t,~t\in\mathcal{T},\\
&(\textbf{u}_r,\textbf{p}_r,\textbf{s}_r,\textbf{w}_r)\in \mathcal{S}_r,~r\in\mathcal{R},\\
&(\textbf{u}_{g},\textbf{q}_{g})\in \mathcal{X}_g,~g\in\mathcal{G}.
\end{aligned}
\end{equation}
The objective is to minimize the total production cost of the rest generators in the system. The detailed formulation of $\mathcal{X}_g$ and the formulation of the cost function $C(\textbf{u}_{g},\textbf{q}_{g})$ of a unit $g\in\mathcal{G}$ will be presented in the Appendix of this paper. Since the operation cost of the PSH units is much lower than the cost of the rest units, their cost is ignored in the objective function.

\section{Two New Formulations}\label{sec3}

When more than one PSH units share the same reservoir, it is common that some parameters of these PSH units are identical. In this case, due to the symmetric structures in the mixed-integer programming problem, the efficiency of a solver will degrade significantly if the standard PSH formulation is used in the unit commitment model. To cope with this issue, we consider two types of symmetric structures in the standard formulation, and propose two new formulations to break the symmetric structures.

\subsection{A Formulation for Identical Units in One Reservoir}

The first type of symmetric structure we consider will be called as full symmetric structure, in which all parameters of the PSH units in the same reservoir are identical. In detail, the set of parameters of a PSH unit is represented by
$$\{\underline{p}^{\textrm{gen}}_g,~\bar{p}^{\textrm{gen}}_g,~\underline{p}^{\textrm{pump}}_g,~\bar{p}^{\textrm{pump}}_g,~\alpha_g,\beta_g\}.$$
Since we have assumed that all the PSH units in reservoir $r$ have the same parameters, the identical parameters can be denoted as
$$\{\underline{p}^{\textrm{gen}}_r,~\bar{p}^{\textrm{gen}}_r,~\underline{p}^{\textrm{pump}}_r,~\bar{p}^{\textrm{pump}}_r,~\alpha_r,\beta_r\}.$$

To derive the aggregated formulation, we define the following aggregate variables for each period $t\in \mathcal{T}$:
\begin{equation}\label{eq12}
\begin{aligned}
&P^{\textrm{gen}}_{r,t}=\sum_{g\in \mathcal{G}_r} p^{\textrm{gen}}_{g,t},~
&P^{\textrm{pump}}_{r,t}=\sum_{g\in \mathcal{G}_r} p^{\textrm{pump}}_{g,t},\\
&U^{\textrm{gen}}_{r,t}=\sum_{g\in \mathcal{G}_r} u^{\textrm{gen}}_{g,t},~
&U^{\textrm{pump}}_{r,t}=\sum_{g\in \mathcal{G}_r} u^{\textrm{pump}}_{g,t},\\
&U^{\textrm{off}}_{r,t}=\sum_{g\in \mathcal{G}_r} u^{\textrm{off}}_{g,t}.
\end{aligned}
\end{equation}
It is clear that $U^{\textrm{gen}}_{r,t}$, $U^{\textrm{pump}}_{r,t}$ and $U^{\textrm{off}}_{r,t}$ denote the total numbers of PSH units in reservoir $r$ that are in the modes of generating, pumping and off-line during period $t$, respectively, and $P^{\textrm{gen}}_{r,t}$ and $P^{\textrm{pump}}_{r,t}$ represent the total amount of generation
and the total amount of pumping load of all the PSH units in reservoir $r$ during period $t$, respectively. While the idea of aggregation is not new \cite{Knueven2017,Sen,Gonzalez,Shortt,Palmintier}, the purpose of our work is to use the aggregate variables to replace the variables of individual PSH units to break the symmetric structures.

To replace the variables of individual PSH units, we change the constraints in \eqref{eq1}, \eqref{eq2}, \eqref{eq4}, \eqref{eq5} and \eqref{eq8} into an aggregated version. From \eqref{eq1}, we can derive the following constraint:
\begin{equation}\label{eq13}
U^{\textrm{gen}}_{r,t}+U^{\textrm{pump}}_{r,t}+U^{\textrm{off}}_{r,t}=n_r,~t\in \mathcal{T},
\end{equation}
where $n_r$ denotes the number of PSH units in reservoir $r$. From \eqref{eq2}, we can derive the following constraints:
\begin{equation}\label{eq14}
\begin{aligned}
&\underline{p}^{\textrm{gen}}_r U^{\textrm{gen}}_{r,t} \leq P^{\textrm{gen}}_{r,t}\leq \bar{p}^{\textrm{gen}}_r U^{\textrm{gen}}_{r,t},~&t\in\mathcal{T},\\
&\underline{p}^{\textrm{pump}}_r U^{\textrm{pump}}_{r,t}\leq P^{\textrm{pump}}_{r,t}\leq \bar{p}^{\textrm{pump}}_r U^{\textrm{pump}}_{r,t},~&t\in\mathcal{T}.
\end{aligned}
\end{equation}
Following the constraints in \eqref{eq4}, the aggregated variables are constrained as follows:
\begin{equation}\label{eq15}
\begin{aligned}
&U_{r,t}^{\textrm{pump}}\leq n_r w_{r,t}^{\textrm{pump}},&t\in \mathcal{T},\\
&U_{r,t}^{\textrm{gen}}\leq n_r w_{r,t}^{\textrm{gen}},&t\in \mathcal{T}.
\end{aligned}
\end{equation}
Finally, the constraints in \eqref{eq5} can be reformulated as follows:
\begin{equation}\label{eq16}
\begin{aligned}
&s_{r,t}=s_{r,t-1}+ \alpha_r P^{\textrm{pump}}_{r,t-1}-\beta_r P^{\textrm{gen}}_{r,t-1},~ t\in \mathcal{T},
\end{aligned}
\end{equation}
and the constraints in \eqref{eq8} can be reformulated as follows:
\begin{equation}\label{eq17}
\begin{aligned}
&s_{r,t-1}+\alpha_r P^{\textrm{pump}}_{r,t-1}\leq \bar{S}_r,&t\in \mathcal{T},\\
&s_{r,t-1}-\beta_r P^{\textrm{gen}}_{r,t-1}\geq \underline{S}_r,&t\in \mathcal{T}.
\end{aligned}
\end{equation}

To give the complete aggregated PSH formulation, we use $(\textbf{U}_r,\textbf{P}_r,\textbf{s}_r,\textbf{w}_r)$ to represent the set of variables in the aggregated formulation, where $\textbf{U}_r$ and $\textbf{P}_r$ are the collections of variables defined as follows:
\begin{equation}\label{eq18}
\begin{aligned}
&\textbf{U}_r=\left\{U^{\textrm{gen}}_{r,t},U^{\textrm{pump}}_{r,t},U^{\textrm{off}}_{r,t}\right\}_{t\in \mathcal{T}},\\
&\textbf{P}_r=\left\{P^{\textrm{gen}}_{r,t},P^{\textrm{pump}}_{r,t}\right\}_{t\in \mathcal{T}},
\end{aligned}
\end{equation}
and $\textbf{s}_r$ and $\textbf{w}_r$ are defined the same as those in \eqref{eq9}.
Then, the aggregated PSH formulation is given as follows:
\begin{equation}\label{eq19}
\mathcal{S}^\prime_{r}=\left\{ (\textbf{U}_r,\textbf{P}_r,\textbf{s}_r,\textbf{w}_r) \,\left|\,
\begin{array}{@{}lll}
& \eqref{eq3},~\eqref{eq6},~\eqref{eq13}-\eqref{eq17}\\[3pt]
& \textbf{U}_r\in \mathbb{Z}^{3T} \\[3pt]
&\textbf{w}_r\in \{0,1\}^{2T}
\end{array}\right.\right\}.
\end{equation}

In the aggregated PSH formulation, we keep the aggregated variables $\textbf{U}_r$ and $\textbf{P}_r$ to represent the overall states of all the units in reservoir $r$, but ignore the variables $\textbf{u}_r$ and $\textbf{p}_r$ that represent the states of each individual unit. Actually, given a solution $(\textbf{U}_r,\textbf{P}_r,\textbf{s}_r,\textbf{w}_r)\in \mathcal{S}^{\prime}_r$, we can recover a solution $(\textbf{u}_r,\textbf{p}_r,\textbf{s}_r,\textbf{w}_r)\in \mathcal{S}_r$ according to the following steps:
For each $t\in\mathcal{T}$, if $U^{\textrm{pump}}_{r,t}>0$ and $U^{\textrm{gen}}_{r,t}=0$, then we arbitrarily select a subset $\mathcal{S}$ of $\mathcal{G}_r$ that contains $U^{\textrm{pump}}_{r,t}$ elements, and set
\begin{equation}\label{eq20}
\begin{aligned}
&\left(u^{\textrm{pump}}_{g,t},u^{\textrm{gen}}_{g,t},u^{\textrm{off}}_{g,t}\right)=\left\{\begin{array}{@{}ll}
(1,0,0),~&\textrm{if}~g\in\mathcal{S},\\
(0,0,1),~&\textrm{if}~g\in \mathcal{G}_r\setminus \mathcal{S},
\end{array}\right.\\
&\left(p^{\textrm{pump}}_{g,t},p^{\textrm{gen}}_{g,t}\right)=\left\{\begin{array}{@{}ll}
(P^{\textrm{pump}}_{r,t}/U^{\textrm{pump}}_{r,t},0),~&\textrm{if}~g\in\mathcal{S},\\[3pt]
(0,0),~&\textrm{if}~g\in \mathcal{G}_r\setminus \mathcal{S}.
\end{array}\right.
\end{aligned}
\end{equation}
If $U^{\textrm{pump}}_{r,t}=0$ and $U^{\textrm{gen}}_{r,t}>0$, then we arbitrarily select a subset $\mathcal{S}$ of $\mathcal{G}_r$ that contains $U^{\textrm{gen}}_{r,t}$ elements, and set
\begin{equation}\label{eq21}
\begin{aligned}
&\left(u^{\textrm{pump}}_{g,t},u^{\textrm{gen}}_{g,t},u^{\textrm{off}}_{g,t}\right)=\left\{\begin{array}{@{}ll}
(0,1,0),~&\textrm{if}~g\in\mathcal{S},\\
(0,0,1),~&\textrm{if}~g\in \mathcal{G}_r\setminus \mathcal{S},
\end{array}\right.\\
&\left(p^{\textrm{pump}}_{g,t},p^{\textrm{gen}}_{g,t}\right)=\left\{\begin{array}{@{}ll}
(0,P^{\textrm{gen}}_{r,t}/U^{\textrm{gen}}_{r,t}),~&\textrm{if}~g\in\mathcal{S},\\[3pt]
(0,0),~&\textrm{if}~g\in \mathcal{G}_r\setminus \mathcal{S}.
\end{array}\right.
\end{aligned}
\end{equation}
If both $U^{\textrm{pump}}_{r,t}=0$ and $U^{\textrm{gen}}_{r,t}=0$, then we set $u^{\textrm{off}}_{g,t}=1$ for all $g\in \mathcal{G}_r$ and set all the other variables to zero.

Thus, for any $(\textbf{U}_r,\textbf{P}_r,\textbf{s}_r,\textbf{w}_r)\in\mathcal{S}^{\prime}_r$, we can recover a solution $(\textbf{u}_r,\textbf{p}_r,\textbf{s}_r,\textbf{w}_r)\in\mathcal{S}_r$. Obviously, the solution that can be recovered is not unique in general, since the subset $\mathcal{S}$ of $\mathcal{G}_r$ can be selected arbitrarily. In this way, the symmetric structure in
$\mathcal{S}_r$, in which there are multiple solutions that correspond to the same aggregated solution in $\mathcal{S}^{\prime}_r$, can be broken via variable aggregation. Thus, using the aggregated formulation can be effective for avoiding unnecessary enumerations in the branch-and-bound algorithm when solving the mixed-integer programming problem using a solver such as Gurobi.

Using the aggregated formulation instead of the standard formulation, the unit commitment problem in \eqref{eq11} can be reformulated as follows:

\begin{equation}\label{eq22}
\begin{aligned}
\min~&\sum_{g\in\mathcal{G}} C(\textbf{u}_{g},\textbf{q}_{g})\\
\textrm{s.t.}~&\sum_{g\in\mathcal{G}} q_{g,t}+ \sum_{r\in\mathcal{R}} (P^{\textrm{gen}}_{r,t}- P^{\textrm{pump}}_{r,t})=D_t,~t\in\mathcal{T},\\
&(\textbf{U}_r,\textbf{P}_r,\textbf{s}_r,\textbf{w}_r)\in\mathcal{S}^{\prime}_r,~r\in\mathcal{R},\\
&(\textbf{u}_{g},\textbf{q}_{g})\in \mathcal{X}_g,~g\in\mathcal{G}.
\end{aligned}
\end{equation}

\subsection{A Formulation via Presolving}

The second type of symmetric structure we consider is called as partial symmetric structure, in which only the generating efficiency $\alpha_g$ and pumping efficiency $\beta_g$ of the PSH units in the same reservoir are identical.
Our numerical studies in Section \ref{sec4} will show that even when the unit commitment problem has a partial symmetric structure, the efficiency of a mixed-integer programming solver will degrade significantly if the standard PSH formulation is applied. Note that the aggregated PSH formulation is not applicable to this case, since the PSH units are not identical. To cope with this type of symmetric structure, we propose another formulation called as presolved PSH formulation in this subsection.

To derive the new formulation, we first define the following set:
\begin{equation}\label{eq23}
\mathcal{P}^{\textrm{pump}}_{r,t}=\left\{P^{\textrm{pump}}_{r,t}\,\left|\,
\begin{array}{@{}lll}
&P^{\textrm{pump}}_{r,t}=\sum_{g\in \mathcal{G}_r} p^{\textrm{pump}}_{g,t}\\[3pt]
& \underline{p}^{\textrm{pump}}_g u^{\textrm{pump}}_{g,t}\leq p^{\textrm{pump}}_{g,t}\leq \bar{p}^{\textrm{pump}}_g u^{\textrm{pump}}_{g,t}\\[3pt]
& \sum_{g\in \mathcal{G}_r} u^{\textrm{pump}}_{g,t}\geq 1 \\[3pt]
& u^{\textrm{pump}}_{g,t}\in\{0,1\},g\in \mathcal{G}_r
\end{array}\right.\right\}.
\end{equation}
For simplicity, we use
$$\textbf{u}^{\textrm{pump}}_{r,t}:=\left\{u^{\textrm{pump}}_{g,t}\right\}_{g\in \mathcal{G}_r}$$
to represent the collection of variables that represent the pumping states of all the PSH units in reservoir $r$ in period $t\in \mathcal{T}$.
Meanwhile, we define the set
\begin{equation}\label{eq24}
\mathcal{U}^{\textrm{pump}}_{r,t}=\left\{ \textbf{u}^{\textrm{pump}}_{r,t} \,\left|\,
\begin{array}{@{}lll}
& \sum_{g\in \mathcal{G}_r} u^{\textrm{pump}}_{g,t}\geq 1 \\[3pt]
& u^{\textrm{pump}}_{g,t}\in\{0,1\},g\in \mathcal{G}_r
\end{array}\right.\right\}
\end{equation}
to represent all the possible states in which there are at least one PSH unit being pumping in period $t$. It is obvious that there are $2^{n_r}-1$ different states in $\mathcal{U}^{\textrm{pump}}_{r,t}$.
Using these notations, the set $\mathcal{P}^{\textrm{pump}}_{r,t}$ can be defined by the union of the intervals each of which corresponds to one of the $2^{n_r}-1$ different states:
\begin{equation}\label{eq25}
\mathcal{P}^{\textrm{pump}}_{r,t}=\bigcup_{ \textbf{u}^{\textrm{pump}}_{r,t}\in \mathcal{U}^{\textrm{pump}}_{r,t}} \left[\sum_{g\in \mathcal{G}_r} \underline{p}^{\textrm{pump}}_g u^{\textrm{pump}}_{g,t},\sum_{g\in \mathcal{G}_r} \bar{p}^{\textrm{pump}}_g u^{\textrm{pump}}_{g,t}\right].
\end{equation}

From \eqref{eq23}--\eqref{eq25}, it is obvious to see that the definition of the set $\mathcal{P}^{\textrm{pump}}_{r,t}$ is independent from the index $t\in \mathcal{T}$ (in other words, for different time period $t\in \mathcal{T}$, the set $\mathcal{P}^{\textrm{pump}}_{r,t}$ is actually identical). Thus, we omit the subscript $t$ and denote the set $\mathcal{P}^{\textrm{pump}}_{r,t}$ as $\mathcal{P}^{\textrm{pump}}_{r}$ for simplicity. In general, the $2^{n_r}-1$  intervals in the right side of \eqref{eq25} may intersect with each other. If any two intervals have a common point, then the union of the two intervals can be merged into one interval. Hence, by merging the intervals that have common points iteratively, the union of the $2^{n_r}-1$ intervals can be reformulated as the union of $k_r$ disjoint intervals:
\begin{equation}\label{eq26}
\mathcal{P}^{\textrm{pump}}_{r}=\bigcup_{i=1}^{k_r} \left[l^{\textrm{pump}}_{r,i},u^{\textrm{pump}}_{r,i}\right].
\end{equation}
where $k_r$ is at most $2^{n_r}-1$.
To formulate $P^{\textrm{pump}}_{r,t}\in \mathcal{P}^{\textrm{pump}}_r$ as mixed-integer linear constraints, we introduce $k_r$ binary variables $\{z^{\textrm{pump}}_{r,t,i}\}_{i=1,\ldots,k_r}$, each of which corresponds to one interval in \eqref{eq26} and represents whether $P^{\textrm{pump}}_{r,t}$ belongs to that interval. Using these binary variables, the constraint $P^{\textrm{pump}}_{r,t}\in \mathcal{P}^{\textrm{pump}}_r$ can be represented as follows:
\begin{equation}\label{eq27}
\begin{aligned}
&P^{\textrm{pump}}_{r,t}=\sum_{i=1}^{k_r} X^{\textrm{pump}}_{r,t,i},\\
&l^{\textrm{pump}}_{r,i} z^{\textrm{pump}}_{r,t,i} \leq X^{\textrm{pump}}_{r,t,i}\leq u^{\textrm{pump}}_{r,i} z^{\textrm{pump}}_{r,t,i},\\
&\sum_{i=1}^{k_r} z^{\textrm{pump}}_{r,t,i}=w^{\textrm{pump}}_{r,t}.
\end{aligned}
\end{equation}

Similar to \eqref{eq23}, we define the following set to represent the possible total amount of generation of all the PSH units in reservoir $r$ in one time period:
\begin{equation}\label{eq28}
\mathcal{P}^{\textrm{gen}}_{r,t}=\left\{P^{\textrm{gen}}_{r,t}\,\left|\,
\begin{array}{@{}lll}
&P^{\textrm{gen}}_{r,t}=\sum_{g\in \mathcal{G}_r} u^{\textrm{gen}}_{g,t}\\[3pt]
& \underline{p}^{\textrm{gen}}_g u^{\textrm{gen}}_{g,t}\leq p^{\textrm{gen}}_{g,t}\leq \bar{p}^{\textrm{gen}}_g u^{\textrm{gen}}_{g,t}\\[3pt]
& \sum_{g\in \mathcal{G}_r} u^{\textrm{gen}}_{g,t}\geq 1 \\[3pt]
& u^{\textrm{gen}}_{g,t}\in\{0,1\},g\in \mathcal{G}_r
\end{array}\right.\right\}.
\end{equation}
We also ignore the subscript $t$ and simply denote $\mathcal{P}^{\textrm{gen}}_{r,t}$ as $\mathcal{P}^{\textrm{gen}}_{r}$. Using a similar derivation as in \eqref{eq23}--\eqref{eq26}, we can reformulate $\mathcal{P}^{\textrm{gen}}_{r}$ as a union of $k^\prime_r$ disjoint intervals:
\begin{equation}\label{eq29}
\mathcal{P}^{\textrm{gen}}_{r}=\bigcup_{i=1}^{k^\prime_r} \left[l^{\textrm{gen}}_{r,i},u^{\textrm{gen}}_{r,i}\right].
\end{equation}
Then, by introducing $k^\prime_r$ binary variables $\{z^{\textrm{gen}}_{r,t,i}\}_{i=1,\ldots,k^\prime_r}$, the constraint $P^{\textrm{gen}}_{r,t}\in \mathcal{P}^{\textrm{gen}}_{r}$ can be represented as follows:
\begin{equation}\label{eq30}
\begin{aligned}
&P^{\textrm{gen}}_{r,t}=\sum_{i=1}^{k_r} X^{\textrm{gen}}_{r,t,i},\\
&l^{\textrm{gen}}_{r,i} z^{\textrm{gen}}_{r,t,i} \leq X^{\textrm{gen}}_{r,t,i}\leq u^{\textrm{gen}}_{r,i} z^{\textrm{gen}}_{r,t,i},\\
&\sum_{i=1}^{k^\prime_r} z^{\textrm{gen}}_{r,t,i}=w^{\textrm{gen}}_{r,t}.
\end{aligned}
\end{equation}

To describe the complete presolved formulation, we use $(\textbf{P}_r,\textbf{X}_r,\textbf{z}_r,\textbf{s}_r,\textbf{w}_r)$ to represent the set of variables, where $\textbf{P}_r$, $\textbf{s}_r$ and $\textbf{w}_r$ are defined the same as in the aggregated formulation, $\textbf{X}_r$ represents the collection of variables $\{X^{\textrm{pump}}_{r,t,i}\}_{t\in\mathcal{T},i\in \{1,\ldots,k_r\}}$ and $\{X^{\textrm{gen}}_{r,t,i}\}_{t\in\mathcal{T},i\in \{1,\ldots,k^\prime_r\}}$, and $\textbf{z}_r$ the collection of variables $\{z^{\textrm{pump}}_{r,t,i}\}_{t\in\mathcal{T},i\in\{1,\ldots,k_r\}}$ and $\{z^{\textrm{gen}}_{r,t,i}\}_{t\in\mathcal{T},i\in \{1,\ldots,k^\prime_r\}}$. Using these notations, we define the following set:
\begin{equation}\label{eq31}
\mathcal{S}^{\prime\prime}_{r}=\left\{ (\textbf{P}_r,\textbf{X}_r,\textbf{z}_r,\textbf{s}_r,\textbf{w}_r) \,\left|\,
\begin{array}{@{}lll}
& \eqref{eq3},~\eqref{eq6},~\eqref{eq16},\\[3pt]
& \eqref{eq17},~\eqref{eq27},~\eqref{eq30}\\[3pt]
& \textbf{z}_r\in \{0,1\}^{T\times(k_r+k^\prime_r)} \\[3pt]
&\textbf{w}_r\in \{0,1\}^{2T}
\end{array}\right.\right\}.
\end{equation}
The reason that we call the formulation $\mathcal{S}^{\prime\prime}_{r}$ as presolved formulation is that its definition is based on a preprocessing step by which the sets $\mathcal{P}^{\textrm{pump}}_{r}$ and $\mathcal{P}^{\textrm{gen}}_{r}$ are reformulated as the unions of disjoint intervals. This preprocessing step is effective for breaking the symmetric structure, since we only keep the variables $\textbf{P}_r$ to represent the total amount of generating and the total amount of pumping load, and drop all the variables that represent the status of PSH units in reservoir $r$. In this way, the solutions that achieve the same amount of total pumping load and the same amount of total generation via different status of PSH units are composed into one solution in the presolved formulation, thus the symmetric structures is broken.

Using the presolved formulation, problem \eqref{eq11} can be reformulated as follows:
\begin{equation}\label{eq32}
\begin{aligned}
\min~&\sum_{g\in\mathcal{G}} C(\textbf{u}_{g},\textbf{q}_{g})\\
\textrm{s.t.}~&\sum_{g\in\mathcal{G}} q_{g,t}+ \sum_{r\in\mathcal{R}}(P^{\textrm{gen}}_{r,t}- P^{\textrm{pump}}_{r,t})=D_t,~t\in\mathcal{T},\\
&(\textbf{P}_r,\textbf{X}_r,\textbf{z}_r,\textbf{s}_r,\textbf{w}_r)\in\mathcal{S}^{\prime\prime}_r,~g\in\mathcal{G}_r,\\
&(\textbf{u}_{g},\textbf{q}_{g})\in \mathcal{X}_g,~g\in\mathcal{G}.
\end{aligned}
\end{equation}

To construct the presolved formulation, we should reformulate $\mathcal{P}^{\textrm{pump}}_{r}$ and $\mathcal{P}^{\textrm{gen}}_{r}$ as the union of disjoint intervals in \eqref{eq26} and \eqref{eq29}. For this purpose, we design an algorithm which can reformulate the union of $k$ intervals into the union of disjoint intervals. The algorithm is described in Algorithm 1.

\begin{algorithm}[t]
\caption{An Algorithm for Merging Intervals}
\small{
\begin{algorithmic}[1]
\STATE \textbf{Input:} A set of intervals $[l_1,u_1],\cdots,[l_k,u_k]\subseteq \mathbb{R}$.
\STATE Using a sorting algorithm to reorder the intervals to the sequence such that $l_1\leq l_2\leq\cdots\leq l_k$.
\STATE Initialize $i=1$, $[l^\prime_1,u^\prime_1]=[l_1,u_1]$, and $\mathcal{L}=\emptyset$.
\FOR{$j=2,\cdots,k$}
\IF{$l_j\leq u^\prime_i$}
\STATE Update $u^\prime_i \leftarrow \max\{u^\prime_i,u_j\}$. 
\ELSE
\STATE Update $\mathcal{L} \leftarrow \mathcal{L} \bigcup [l^\prime_i,u^\prime_i]$.
\STATE Set $[l^\prime_{i+1},u^\prime_{i+1}]=[l_j,u_j]$ .
\STATE Update $i=i+1$.
\ENDIF
\ENDFOR
\STATE Update $\mathcal{L} \leftarrow \mathcal{L} \bigcup [l^\prime_i,u^\prime_i]$ and set $k^\prime=i$.
\STATE \textbf{Output:} The nonintersecting intervals $[l^\prime_1,u^\prime_1],\cdots,[l^\prime_{k^\prime},u^\prime_{k^\prime}]$.
\end{algorithmic}}
\label{alg1}
\end{algorithm}

To show the intervals $[l^\prime_1,u^\prime_1],\cdots,[l^\prime_{k^\prime},u^\prime_{k^\prime}]$ generated by Algorithm \ref{alg1} do not intersect with each other, we prove the following proposition:
\begin{proposition}
The intervals $[l^\prime_1,u^\prime_1],\cdots,[l^\prime_{k^\prime},u^\prime_{k^\prime}]$ returned by Algorithm 1 satisfy $l^\prime_1\leq u^\prime_1<l^\prime_2\leq u^\prime_2<\cdots<l^\prime_{k^\prime}\leq u^\prime_{k^\prime}$.
\end{proposition}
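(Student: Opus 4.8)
The plan is to prove the two chains of inequalities separately: the weak inequalities $l^\prime_i \le u^\prime_i$ that assert each output interval is well-formed, and the strict inequalities $u^\prime_i < l^\prime_{i+1}$ that assert consecutive intervals are disjoint. The whole argument rests on a single structural observation about the loop, which I would establish first as an invariant. Namely, I would observe that the working index $i$ in Algorithm~\ref{alg1} is nondecreasing throughout the \textbf{for} loop, and that at every iteration the only endpoint that can be modified is $u^\prime_i$, the upper bound of the current (last) interval, whereas the lower bound $l^\prime_i$ is assigned exactly once (either in the initialization on line~3 or on line~9) and is never touched again. The crucial consequence is that once the index is incremented from $i$ to $i+1$ on line~10, the value of $u^\prime_i$ is frozen for the remainder of the algorithm. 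I would state this precisely as a loop invariant and verify it by a direct case inspection of the two branches.

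For the well-formedness $l^\prime_i \le u^\prime_i$, I would note that every interval $[l^\prime_i,u^\prime_i]$ is created from some input interval $[l_j,u_j]$, so immediately after its creation $l^\prime_i = l_j \le u_j = u^\prime_i$. The only subsequent modification of $u^\prime_i$ is the assignment $u^\prime_i \leftarrow \max\{u^\prime_i,u_j\}$ on line~6, which can only increase $u^\prime_i$, while $l^\prime_i$ never changes; hence $l^\prime_i \le u^\prime_i$ is preserved. A one-line induction on the iterations makes this rigorous.

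For the strict separation $u^\prime_i < l^\prime_{i+1}$, I would argue that a new interval with index $i+1$ is created only through the \textbf{else} branch (lines~8--10), which is entered precisely when the test $l_j \le u^\prime_i$ on line~5 fails, i.e. when $l_j > u^\prime_i$. At that moment line~9 sets $l^\prime_{i+1} = l_j$, so $l^\prime_{i+1} = l_j > u^\prime_i$. By the frozen-endpoint invariant, the value of $u^\prime_i$ does not change after this point, so this strict inequality survives into the final output. Chaining the two families of inequalities then yields exactly $l^\prime_1 \le u^\prime_1 < l^\prime_2 \le u^\prime_2 < \cdots < l^\prime_{k^\prime} \le u^\prime_{k^\prime}$.

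I expect the main obstacle to be stating the invariant cleanly rather than any genuine difficulty: the only way the separation could fail is if some $u^\prime_i$ were enlarged after the next interval had already begun, and ruling this out is exactly what the nondecreasing-index / frozen-endpoint invariant accomplishes. I would also remark that the sortedness $l_1 \le \cdots \le l_k$ produced on line~2 is not strictly needed for either inequality chain itself; its role is only to guarantee that the output left endpoints $l^\prime_1,\ldots,l^\prime_{k^\prime}$ appear in increasing order and that no later interval can reopen an already-closed one, which is implicitly what keeps the invariant consistent.
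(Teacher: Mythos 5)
Your proof is correct and takes essentially the same route as the paper's: both hinge on the observation that a new interval $[l^\prime_{d+1},u^\prime_{d+1}]$ is created only when the test $l_j\leq u^\prime_d$ on line 5 fails, so that $l^\prime_{d+1}=l_j>u^\prime_d$, with the full chain then following by induction over the loop. Your explicit frozen-endpoint / nondecreasing-index invariant (and the observation that line 6 only enlarges the current upper endpoint, preserving $l^\prime_i\leq u^\prime_i$) simply spells out what the paper's induction leaves implicit, so it is a tightening of the same argument rather than a different approach.
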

\begin{proof}
We prove this proposition by induction. First, for $i=1$, it is trivial to see that $l^\prime_1\leq u^\prime_1$ holds. Now, we assume that $l^\prime_1\leq u^\prime_1<l^\prime_2\leq u^\prime_2<\cdots<l^\prime_d\leq u^\prime_d$ for some $d\in\{1,\ldots, k^\prime-1\}$, and consider the interval $[l^\prime_{d+1},u^\prime_{d+1}]$.
Note that the interval $[l^\prime_{d+1},u^\prime_{d+1}]$ is generated at the moment when Algorithm \ref{alg1} is running to Line 9 with $i=d$. At this moment, the condition in Line 5 is not satisfied. In other words, we have $l_{j}> u^\prime_i=u^\prime_d$. Meanwhile, the interval $[l^\prime_{d+1},u^\prime_{d+1}]$ is set to $[l_j,u_j]$ at the same moment. Thus, we have $l^\prime_{d+1}> u^\prime_{d}$.
Thus, by induction, the proposition is correct.
\end{proof}

Given $k$ intervals as the input, the complexity of Algorithm \ref{alg1} is $\mathcal{O}(k \log k)$. When applying Algorithm \ref{alg1} for constructing either $\mathcal{P}^{\textrm{gen}}_{r}$ or $\mathcal{P}^{\textrm{pump}}_{r}$, the number of intervals input to the algorithm is $2^{n_r}-1$. Thus, using Algorithm 1 as the presolving algorithm for constructing the presolved formulation \eqref{eq32}, the complexity is $\mathcal{O}(n_r\cdot 2^{n_r})$. Although the complexity is on the order of exponential, the value $n_r$ is generally very small. Thus,  in practice, the computational time for running this presolving is almost ignorable in comparing with the time of solving \eqref{eq32}.

The number of binary variables in the presolved formulation depends on the distributions of the maximum and minimum generation/pumping power of each unit. In the worst-case, the number of disjoint intervals for representing the set $\mathcal{P}^{\textrm{pump}}_{r}$ and $\mathcal{P}^{\textrm{gen}}_{r}$ is $2^{n_r}-1$. In this case, the presolved formulation is not compact. On the other hand, if the distributions of the maximum and minimum generation/pumping power of different units are close to each other, then the number of disjoint intervals $k^\prime$ returned by Algorithm \ref{alg1} is generally much fewer than $2^{n_r}-1$. For example, consider the representation in \eqref{eq25}. If the intersection $\bigcap_{g\in \mathcal{G}_r}\left[\underline{p}^{\textrm{pump}}_g,\bar{p}^{\textrm{pump}}_g\right]$ is nonempty and has a common point $p^{\ast}$, then the number of disjoint intervals in \eqref{eq26} is no more than $n_r$, since for each $s\in\{1,\ldots,n_r\}$, we have
\begin{equation}\label{eq33}
 p^{\ast}s\in\left[\sum_{g\in \mathcal{G}_r} \underline{p}^{\textrm{pump}}_g u^{\textrm{pump}}_{g,t},\sum_{g\in \mathcal{G}_r} \bar{p}^{\textrm{pump}}_g u^{\textrm{pump}}_{g,t}\right]
\end{equation}
for all $\textbf{u}^{\textrm{pump}}_{r,t}\in \mathcal{U}^{\textrm{pump}}_{r,t}$ such that $\sum_{g\in \mathcal{G}_r} u^{\textrm{pump}}_{g,t}=s$, and these intervals will be merged. The above analysis explains why the presolved formulation is generally compact in practice.

\begin{table}[h]
\begin{center}
\caption{Parameters of the PSH unit.}\label{tab1}
\begin{tabular}{cccccc}
\toprule
$\underline{p}^{\textrm{gen}}_g$ &$\bar{p}^{\textrm{gen}}_g$ &$\underline{p}^{\textrm{pump}}_g$ &$\bar{p}^{\textrm{pump}}_g$  &\multirow{2}*{$\alpha_g$} &\multirow{2}*{$\beta_g$}\\
(MW) &(MW) &(MW) &(MW)  &~ &~\\
\midrule
100 &200 &195 &205 &0.9 &0.9\\
\bottomrule
\end{tabular}
\end{center}
\end{table}

\begin{table}[h]
\begin{center}
\caption{Reservoid parameters for our case study.}\label{tab2}
\begin{tabular}{cccc}
\toprule
$\underline{S}_r$ &$\bar{S}_r$ &$s_{r,0}$ &$S_{r,T}$ \\
(MWh) &(MWh) &(MWh) &(MWh) \\
\midrule
1000 &3500 &2600 &2600 \\
\bottomrule
\end{tabular}
\end{center}
\end{table}

%

At the end of this section, we provide an example to illustrate the two new formulations. Consider the case in which the plant have $3$ identical units sharing the same reservoir. The parameters of the identical PSH units are given in Table \ref{tab1}, and some other parameters of the plant are given in Table \ref{tab2}. Then, the constraints in the aggregated formulation are given as follows:
\begin{equation}\label{eq34}
\begin{aligned}
&U^{\textrm{gen}}_{r,t}+U^{\textrm{pump}}_{r,t}+U^{\textrm{off}}_{r,t}=3,~t\in \mathcal{T},\\
&100 U^{\textrm{gen}}_{r,t} \leq P^{\textrm{gen}}_{r,t}\leq 200 U^{\textrm{gen}}_{r,t},~t\in \mathcal{T},\\
&195 U^{\textrm{pump}}_{r,t}\leq P^{\textrm{pump}}_{r,t}\leq 205 U^{\textrm{pump}}_{r,t},~t\in \mathcal{T},\\
&w_{r,t}^{\textrm{pump}}+w_{r,t}^{\textrm{gen}}\leq 1,~t\in \mathcal{T},\\
&U_{r,t}^{\textrm{pump}}\leq 4 w_{r,t}^{\textrm{pump}},~t\in \mathcal{T},\\
&U_{r,t}^{\textrm{gen}}\leq 4 w_{r,t}^{\textrm{gen}},~t\in \mathcal{T},\\
&\eqref{eq6},~\eqref{eq16},~\eqref{eq17},\\
&U^{\textrm{pump}}_{r,t},~U^{\textrm{gen}}_{r,t}\in\mathbb{Z},~t\in \mathcal{T},\\
&w_{r,t}^{\textrm{gen}},~w_{r,t}^{\textrm{pump}}\in\{0,1\},~t\in \mathcal{T},
\end{aligned}
\end{equation}
and the constraints in the presolved formulation are given as follows:
\begin{equation}\label{eq35}
\begin{aligned}
&P^{\textrm{gen}}_{r,t}=X^{\textrm{gen}}_{r,t,1},~t\in \mathcal{T},\\
&100 z^{\textrm{gen}}_{r,t,1}\leq X^{\textrm{gen}}_{r,t,1}\leq 600 z^{\textrm{gen}}_{r,t,1},~t\in \mathcal{T},\\
&P^{\textrm{pump}}_{r,t}= X^{\textrm{pump}}_{r,t,1}+ X^{\textrm{pump}}_{r,t,2}+ X^{\textrm{pump}}_{r,t,3},~t\in \mathcal{T},\\
&195 z^{\textrm{pump}}_{r,t,1}\leq X^{\textrm{pump}}_{r,t,1}\leq 205 z^{\textrm{gen}}_{r,t,1},~t\in \mathcal{T},\\
&390 z^{\textrm{pump}}_{r,t,2}\leq X^{\textrm{pump}}_{r,t,2}\leq 410 z^{\textrm{gen}}_{r,t,2},~t\in \mathcal{T},\\
&585 z^{\textrm{pump}}_{r,t,3}\leq X^{\textrm{pump}}_{r,t,3}\leq 615 z^{\textrm{gen}}_{r,t,3},~t\in \mathcal{T},\\
&w_{r,t}^{\textrm{pump}}+w_{r,t}^{\textrm{gen}}\leq 1,~t\in \mathcal{T},\\
&z^{\textrm{gen}}_{r,t,1}=w_{r,t}^{\textrm{gen}},~t\in \mathcal{T},\\
&z^{\textrm{pump}}_{r,t,1}+z^{\textrm{pump}}_{r,t,2}+z^{\textrm{pump}}_{r,t,3}=w_{r,t}^{\textrm{pump}},~t\in \mathcal{T},\\
&\eqref{eq6},~\eqref{eq16},~\eqref{eq17},\\
&z^{\textrm{gen}}_{r,t,1},~z^{\textrm{pump}}_{r,t,1},~z^{\textrm{pump}}_{r,t,2},~z^{\textrm{pump}}_{r,t,3}\in\{0,1\},~t\in \mathcal{T},\\
&w_{r,t}^{\textrm{gen}},~w_{r,t}^{\textrm{pump}}\in\{0,1\},~t\in \mathcal{T}.\\
\end{aligned}
\end{equation}

\section{Numerical Experiments}\label{sec4}

In this section, we carry out numerical studies on comparing the three formulations, including the standard formulation, the aggregated formulation, and the presolved formulation, to illustrate the value of different PSH formulations to the computational efficiency of Gurobi, a well-known state-of-the-art commercial solver. Our numerical experiments are carried out on a personal computer with 9th Gen Intel(R) Core(TM) i7-9700 3.0 GHz CPU and 16 GB RAM. All the mixed-integer programming problems in different formulations are solved using Gurobi 10.0.

\subsection{Unit Commitment with Identical PSH Units}\label{subsec41}

In this subsection, we consider the cases of unit commitment problem that have multiple identical PSH units being incorporated. We assume that these identical PSH units share the same reservoir. We evaluate the impact of the three formulations in \eqref{eq11}, \eqref{eq22} and \eqref{eq32}. The test instances of the unit commitment problems used in our numerical studies have been generated in \cite{Bacci} and are publicly available\footnote{The test instances are selected from the set DS1 in \cite{Bacci}, which are downloaded from \url{https://commalab.di.unipi.it/files/Data/UC/RCUC2.tgz}}. However, these instances only contain pure thermal units. Hence, in addition to the thermal units, some identical PSH units are incorporated into the test instances. The parameters of the PSH units, and the parameters of the reservoir used in our case study are presented in Table \ref{tab1}.

We solve the test instances using the three formulations, including the standard formulation, the aggregated formulation, and the presolved formulation. In our experiments, the parameters OptimalityTol and MIPGap of Gurobi are all set to $10^{-4}$, and the default symmetric breaking techniques implemented in Gurobi are turned-off (by setting the parameter ``Symmetry'' in Gurobi to 0). The time limit is set to 1800 seconds. Besides of the above configurations, we also run another implementation, named as Standard+Sym, using the standard formulation, with the default symmetric breaking techniques in Gurobi turned-on.

We consider 16 settings that have different number of thermal units $n$  (10 and 20), different time periods $T$ (24 and 36), and different number of identical PSH units (ranging from 1 to 4). In each setting, there are 5 instances being solved. The average of the results for each setting are obtained and listed in Table \ref{tab4}.

\begin{table*}[t]
\begin{center}
\caption{Numerical Results on Unit Commitment Problems with Identical PSH Units}\label{tab4}
\resizebox{\textwidth}{!}{%
\begin{tabular}{ccrrrrrrrrrrrr}
\toprule
\multirow{2}*{$T$} &\multirow{2}*{$n$} &\multicolumn{3}{c}{Standard} &\multicolumn{3}{c}{Aggregated} &\multicolumn{3}{c}{Presolved} &\multicolumn{3}{c}{Standard+Sym}\\
\cline{3-5}\cline{6-8}\cline{9-11}\cline{12-14}
~ & ~ &solved &time(s) &nodes &solved &time(s) &nodes &solved &time(s) &nodes &solved &time(s) &nodes\\
\midrule
\multicolumn{14}{c}{Number of Identical PSH Units = 1}\\
\midrule
24 &10 &5 &0.816 &182.8  &5 &0.83 &182.8 &5 &0.776 &170.4 &5 &0.820 &182.8 \\
24 &20 &5 &2.946 &1320.8 &5 &2.866 &1324.8 &5 &2.876 &1279.2 &5 &2.902 &1324.8\\
36 &10 &5 &1.375 &303.5 &5 &1.390 &303.5 &5 &1.373 &263.5 &5 &1.393 &303.5 \\
36 &20 &5 &14.892 &3698.0 &5 &14.732 &3643.6 &5 &11.802 &3453.6 &5 &14.548 &3643.6 \\
\midrule
\multicolumn{14}{c}{Number of Identical PSH Units = 2}\\
\midrule
24 &10  &5 &1.784 &4460.8 &5 &0.808 &381.8 &5 &1.010 &470.6 &5 &1.090 &532.8\\
24 &20  &5 &4.006 &2750.8 &5 &3.390 &1957.6 &5 &3.386 &1973.6 &5 &3.968 &2117.0 \\
36 &10  &5 &2.284 &2310.4 &5 &1.296 &227.2 &5 &1.670 &570.0 &5 &1.702 &397.4\\
36 &20  &5 &8.408 &3094.4 &5 &7.402 &1249.4 &5 &7.396 &1373.6 &5 &7.710 &1236.8\\
\midrule
\multicolumn{14}{c}{Number of Identical PSH Units = 3}\\
\midrule
24 &10 &5 &7.484 &42167.4 &5 &0.854 &338.4  &5 &0.884 &647.2 &5 &1.376 &574.4  \\
24 &20 &5 &13.284 &34563.0 &5 &3.462 &2499.0 &5 &3.846 &2499.4 &5 &6.050 &5945.2 \\
36 &10 &3 &64.523 &314746.0 &5 &1.404 &321.0 &5 &2.282 &1156.0 &5 &11.098 &3651.8 \\
36 &20 &4 &364.825 &713711.3 &5 &7.910 &1207.6 &5 &8.830 &2307.4 &5 &9.592 &1739.0 \\
\midrule
\multicolumn{14}{c}{Number of Identical PSH Units = 4}\\
\midrule
24 &10  &5 &45.808 &606641.8 &5 &0.790 &392.0 &5 &1.240 &814.8 &5 &1.356 &561.6  \\
24 &20  &4 &41.760 &107264.3 &5 &3.652 &2640.6 &5 &4.866 &3504.8 &5 &6.648 &4223.2\\
36 &10  &1 &1165.650 &5191956.0 &5 &1.528 &363.6 &5 &3.998 &1972.4 &5 &3.652 &3287.4\\
36 &20  &2 &924.820 &2170526.5 &5 &7.250 &978.4 &5 &9.500 &2670.2 &5 &13.296 &2242.6\\
\bottomrule
\end{tabular}
}
\end{center}
\end{table*}

\begin{figure}[h]
\centering
\includegraphics[width=9.5cm]{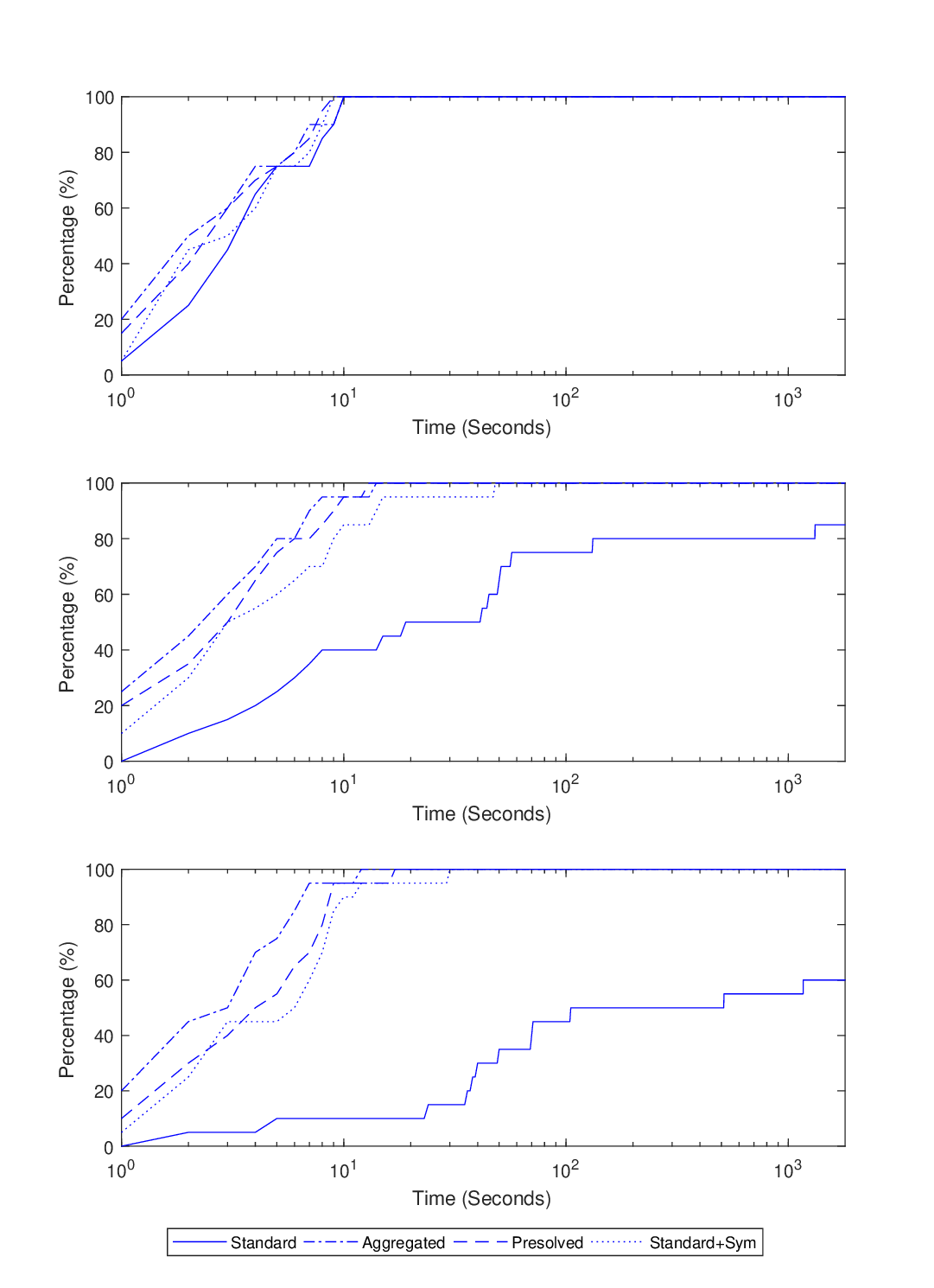}
\caption{The performance profile of using different formulations. From top to bottom, the three figures illustrate the case in which there are 2, 3 and 4 identical PSH units being incorporated, respectively.}\label{fig1}
\end{figure}

From the results listed in Table \ref{tab4}, we can see that the four implementations have similar performance when there is only one PSH unit in the reservoir. This is reasonable because the three formulations are almost the same in this case, and there is no symmetric structure in the problem. On the other hand, as the number of identical PSH units increases, the efficiency of the four implementations become quite different. For the standard formulation without symmetric breaking, the efficiency of the solver is much lower than the other three implementations. As we can see from the results under the column of ``Standard'' in Table \ref{tab4}, not only the number of problems solved within 1800 seconds is fewer, but also the average computational time is much longer, especially when the number of identical PSH units is larger than or equal to three. This observation indicates that the symmetric structures in the standard formulation have a significant impact on the efficiency of a solver.

In comparison, when using the two new formulations, the average computational time can be much shorter than the implementation of using the standard formulation. Moreover, for most cases, the implementations of using the two new formulations perform even better than the implementation of using the standard formulation together with the default symmetric breaking techniques in Gurobi. The above comparison results show that the two newly proposed formulations are very effective for breaking the symmetric structure, which can improve the efficiency of a solver effectively.

To further illustrate the performance of the four implementations, we also plot the performance profile in Figure \ref{fig1}. We consider the three cases where the numbers of identical PSH units ranges from $2$ to $4$. For each case, there are 20 test instances (corresponding to the 4 settings that have different number of thermal units and different number of periods). Figure \ref{fig1} illustrates the percentage of the test instances that can be solved within various time limit. From the results illustrated in Figure \ref{fig1}, we can see that the aggregated formulation based implementation performs best, and the presolved formulation based implementation also performs better than the standard formulation with the default symmetric breaking techniques. These results indicate that using the two new formulations to break the symmetric structure in the problem is more powerful than using the default symmetric breaking techniques in Gurobi to handle the standard formulation. Moreover, we would like to mention that the two new formulations can be easily applicable to other open-source solvers that do not support a powerful symmetric breaking technique as in Gurobi.

\subsection{Unit Commitment with Nonidentical PSH units}

In this subsection, we consider the case of unit commitment problem in which the PSH units are not identical, but have the same generating efficiency and pumping efficiency. In this case, since the aggregated formulation is not applicable, we only compare the standard formulation with the presolved formulation.

The test instances used in this section is similar to those in subsection \ref{subsec41}, except that the PSH units being incorporated are not identical. The parameters of the PSH units used in our numerical studies are listed in Table \ref{tab3}, in which only the parameters of generating and pumping efficiency are identical. We solve the two formulations using Gurobi with both parameters OptimalityTol and MIPGap of Gurobi being set to $10^{-4}$, and the time limit is set to 1800 seconds. The default symmetric breaking techniques are turned-on (In fact, the default symmetric breaking techniques have only marginal impacts on both formulations).

\begin{table}[t]
\begin{center}
\caption{Parameters of the PSH units.}\label{tab3}
\begin{tabular}{ccccccc}
\toprule
\multirow{2}*{Unit ID} &$\underline{p}^{\textrm{gen}}_g$ &$\bar{p}^{\textrm{gen}}_g$ &$\underline{p}^{\textrm{pump}}_g$ &$\bar{p}^{\textrm{pump}}_g$  &\multirow{2}*{$\alpha_g$} &\multirow{2}*{$\beta_g$}\\
~  &(MW) &(MW) &(MW) &(MW)  &~ &~\\
\midrule
1 &95 &195 &195 &205 &0.9 &0.9\\
2 &100 &200 &193 &208 &0.9 &0.9\\
3 &105 &205 &192 &204 &0.9 &0.9\\
4 &110 &210 &197 &210 &0.9 &0.9\\
\bottomrule
\end{tabular}
\end{center}
\end{table}

We consider 12 settings that have different number of thermal units $n$, different time periods $T$, and different number of PSH units. For an instance that have $k$ PSH units (where $k$ ranges from 2 to 4), the parameters of the first $k$ units listed in Table \ref{tab3} are used to construct the instance. In each setting, there are 5 instances. The average of the results for each setting are listed in Table \ref{tab5}. The performance profile for comparing the two formulations is shown in Figure \ref{fig2}.

\begin{table}[h]
\begin{center}
\caption{Numerical Results on Unit Commitment Problems with Nonidentical PSH Units}\label{tab5}
\begin{tabular}{ccrrrrrrrrrrrr}
\toprule
\multirow{2}*{$T$} &\multirow{2}*{$n$} &\multicolumn{3}{c}{Standard} &\multicolumn{3}{c}{Presolved}\\
\cline{3-5}\cline{6-8}
~ & ~ &solved &time(s) &nodes &solved &time(s) &nodes \\
\midrule
\multicolumn{8}{c}{Number of PSH Units = 2}\\
\midrule
24 &10 &5 &2.6 &8141.0 &5 &0.9 &577.4  \\
24 &20 &5 &3.5 &2415.0 &5 &3.1 &1764.4 \\
36 &10 &5 &2.8 &4398.6 &5 &1.7 &632.6\\
36 &20 &5 &8.4 &2883.4 &5 &7.6 &1464.2 \\
\midrule
\multicolumn{8}{c}{Number of PSH Units = 3}\\
\midrule
24 &10 &5 &4.7 &20348.0 &5 &1.2 &846.2 \\
24 &20 &5 &24.5 &70966.2 &5 &3.8 &3141.6\\
36 &10 &4 &23.2 &89689.3 &5 &3.1 &2002.0 \\
36 &20 &5 &84.1 &135605.6 &5 &11.2 &2401.2 \\
\midrule
\multicolumn{8}{c}{Number of PSH Units = 4}\\
\midrule
24 &10 &5 &15.3 &119290.8 &5 &1.1 &899.2 \\
24 &20 &4 &206.7 &387530.3 &5 &3.8 &2620.2 \\
36 &10 &2 &74.0 &465312.5 &5 &3.3 &1079.4 \\
36 &20 &2  &28.1 &30725.0 &5 &19.3 &4257.4\\
\bottomrule
\end{tabular}
\end{center}
\end{table}

From the results in Table \ref{tab5}, we can see that for the standard formulation, even when there are as few as three or four PSH units in the unit commitment problem, the efficiency of solving a test instance using Gurobi is quite low. As we can see in the column ``solved'' of the standard formulation, there are several test instances that can not be solved within 1800 seconds. The above observations indicate that incorporating PSH units into a unit commitment problem will bring a great challenge on the efficiency of a solver if the problem formulation is not well designed. Moreover, the default symmetric breaking techniques in Gurobi do not work in these cases.

On the other hand, the results in Table \ref{tab5} show that when using the presolved formulation, all the test instances can be solved within a reasonable computational time, and the average computational time can be much shorter than the time of using the standard formulation. These comparison results indicate that using the presolved formulation is effective on handling the partial symmetric structures embedded in the problem. 

\begin{figure}[!t]
\centering
\includegraphics[width=9.5cm]{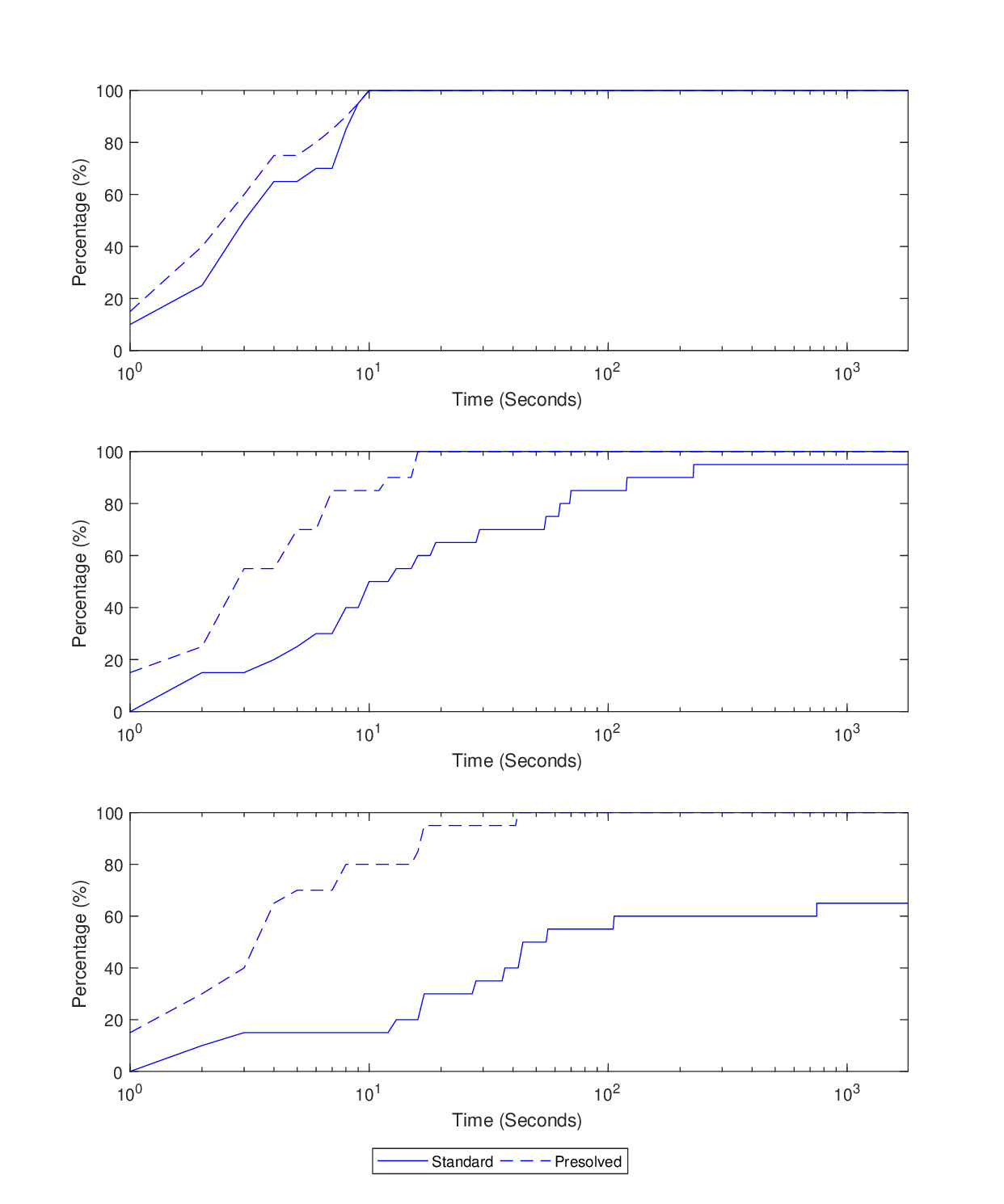}
\caption{The performance profile of using different formulations. From top to bottom, the three figures illustrate the case in which there are 2, 3 and 4 PSH units being incorporated, respectively.}\label{fig2}
\end{figure}

\section{Conclusion}
In this paper, we consider two types of symmetric structures arisen from the PSH units in a unit commitment problem, including the full symmetric structure, and the partial symmetric structure. We have shown that even when there are only three or four PSH units in a unit problem, the efficiency of a solver can degrade significantly if the symmetric structures in the problem are not taken into consideration.

To break the symmetric structures in the problem, we have proposed two new formulations: One is called as aggregated formulation, which can be applied to the case of full symmetric structure, and the other is called as presolved formulation, which can be applied to both the cases of full and partial symmetric structure. Our numerical experiments on some publicly available test instances indicate that the proposed formulations can break the symmetric structure in the problem effectively, and improve the efficiency of a solver significantly.

{\appendix[Formulations of the unit commitment problem]

In the unit commitment models \eqref{eq11}, \eqref{eq22} and \eqref{eq32}, we have applied the 3-bin formulation in \cite{Rajan} to formulate the constraint $(\textbf{u}_{g},\textbf{q}_{g})\in \mathcal{X}_g$. We provide its details in this appendix for the completeness of this paper.

The lower and upper bounds constraints on the powers are imposed as follows:
\begin{equation}\label{eq36}
\underline{q}_g x_{g,t} \leq q_{g,t}\leq \bar{q}_g x_{g,t},~\forall g\in \mathcal{G},t\in\mathcal{T}.\\
\end{equation}
The state transition of a unit can be formulated as follows:
\begin{equation}\label{eq37}
x_{g,t}-x_{g,t-1}=v_{g,t}-w_{g,t},~\forall g\in \mathcal{G},t\in\mathcal{T},\\
\end{equation}
where $x_{g,0}$ is a given constant that represents the initial condition of unit $g$.
The minimum up-time and down-time constraints can be formulated as follows:
\begin{equation}\label{eq38}
\begin{aligned}
&\sum_{s= t-\tau^{+}_{g}+1}^t v_{g,s}\leq x_{g,t},& g\in \mathcal{G},~t\in\{\tau_g^{+},\ldots,T\},\\
&\sum_{s= t-\tau^{-}_{g}+1}^t w_{g,s}\leq 1-x_{g,t},& g\in \mathcal{G},~t\in\{\tau_g^{-},\ldots,T\}.
\end{aligned}
\end{equation}
The constraints to specify initial conditions of a unit $g$ are imposed. We use $\tau_g^0$ to denote the initial state of the unit $g$, which is defined as follows: If $\tau_g^0>0$, then the unit has been in the on-state for $\tau_g^0$ time periods. On the other hand, if $\tau_g^0\leq 0$, then the unit has been in the off-state for $-\tau_g^0$ time periods.
For the case of $1<\tau_i^0<\tau_g^+$, the minimum up-time constraints can be defined by
\begin{equation}\label{eq39}
x_{g,t}=1,~t\in \{1,\ldots,\tau_g^{+} -\tau_g^0\}.
\end{equation}
Similarly, for the case of $1<-\tau_i^0<\tau_i^-$, the minimum down-time constraints can be defined by
\begin{equation}\label{eq40}
x_{g,t}=0,~t\in\{1,\ldots,\tau_g^{-} +\tau_g^0\}.
\end{equation}
The ramping constraints, together with the special treatments required by the start-up and shut-down periods, are formulated as follows:
\begin{equation}\label{eq41}
\begin{aligned}
&q_{g,t}-q_{g,t-1}\leq \Delta_g^{+} x_{g,t-1} + \Lambda_g^{+} v_{g,t}, &g\in \mathcal{G},~t\in\mathcal{T},\\
&q_{g,t-1}-q_{g,t}\leq \Delta_g^{-} x_{g,t} + \Lambda_g^{-} w_{g,t},&g\in \mathcal{G},~t\in\mathcal{T}.
\end{aligned}
\end{equation}

Given the above constraints, the set $\mathcal{X}_{g}$ is given as follows:
\begin{equation}\label{eq42}
\mathcal{X}_{g}=\left\{ (\textbf{u}_g,\textbf{q}_g) \,\left|\,
\begin{array}{@{}lll}
& \eqref{eq36}-\eqref{eq41},\\[3pt]
& \textbf{u}_g\in \{0,1\}^{3T} \\[3pt]
&\textbf{q}_g\in \mathbb{R}^{T}
\end{array}\right.\right\},
\end{equation}
where
\begin{equation}\label{eq43}
\begin{aligned}
&\textbf{u}_g=\left\{x_{g,t},v_{g,t},w_{g,t}\right\}_{g\in \mathcal{G},t\in \mathcal{T}},\\
&\textbf{q}_g=\left\{q_{g,t}\right\}_{g\in \mathcal{G},t\in \mathcal{T}}.
\end{aligned}
\end{equation}
Finally, the cost function of a unit $g$ is defined as follows:
\begin{equation}\label{eq44}
C(\textbf{u}_{g},\textbf{q}_{g})=\sum_{t\in\mathcal{T}} a_g q^2_{g,t} + b_g q_{g,t} +c_g x_{g,t} +s_g v_{g,t}.
\end{equation}}

\bibliographystyle{IEEEtran}

\bibliography{PSH}

\begin{thebibliography}{10}
\providecommand{\url}[1]{#1}
\csname url@samestyle\endcsname
\providecommand{\newblock}{\relax}
\providecommand{\bibinfo}[2]{#2}
\providecommand{\BIBentrySTDinterwordspacing}{\spaceskip=0pt\relax}
\providecommand{\BIBentryALTinterwordstretchfactor}{4}
\providecommand{\BIBentryALTinterwordspacing}{\spaceskip=\fontdimen2\font plus
\BIBentryALTinterwordstretchfactor\fontdimen3\font minus \fontdimen4\font\relax}
\providecommand{\BIBforeignlanguage}[2]{{%
\expandafter\ifx\csname l@#1\endcsname\relax
\typeout{** WARNING: IEEEtran.bst: No hyphenation pattern has been}%
\typeout{** loaded for the language `#1'. Using the pattern for}%
\typeout{** the default language instead.}%
\else
\language=\csname l@#1\endcsname
\fi
#2}}
\providecommand{\BIBdecl}{\relax}
\BIBdecl

\bibitem{Roberts}
B.~P. Roberts and C.~Sandberg, ``The role of energy storage in development of smart grids,'' \emph{Proc. IEEE}, vol.~99, no.~6, pp. 1139--1144, 2011.

\bibitem{Castronuovo2004a}
E.~D. Castronuovo and J.~P. Lopes, ``On the optimization of the daily operation of a wind-hydro power plant,'' \emph{IEEE Transaction on Power Systems}, vol.~19, no.~3, pp. 1599--1606, 2004.

\bibitem{Duque}
{\'{A}}.~J. Duque, E.~D. Castronuovo, I.~{S\'{a}nchez}, and J.~Usaola, ``Optimal operation of a pumped-storage hydro plant that compensates the imbalances of a wind power producer,'' \emph{Electric Power Syst. Res.}, vol.~81, no.~9, pp. 1767--1777, 2003.

\bibitem{Korpaas}
M.~Korpaas, A.~T. Holen, and R.~Hildrum, ``Operation and sizing of energy storage for wind power plants in a market system,'' \emph{Int. J. Elect. Power Energy Syst.}, vol.~25, no.~8, pp. 599--606, 2003.

\bibitem{Castronuovo2004b}
E.~D. Castronuovo and J.~P. Lopes, ``Optimal operation and hydro storage sizing of a wind-hydro power plant,'' \emph{Int. J. Elect. Power Energy Syst.}, vol.~26, no.~10, pp. 771--778, 2004.

\bibitem{Brown}
P.~D. Brown and J.~P. Lopes, ``Optimization of pumped storage capacity in an isolated power system with large renewable penetration,'' \emph{IEEE Transaction on Power Systems}, vol.~23, no.~2, pp. 523--531, 2008.

\bibitem{Ni}
E.~Ni, P.~B. Luh, and S.~Rourke, ``Optimal integrated generation bidding and scheduling with risk management under a deregulated power market,'' \emph{IEEE Transaction on Power Systems}, vol.~19, no.~1, pp. 600--609, 2004.

\bibitem{Gonzalez}
J.~{Garcia-Gonzalez}, R.~M.~R. {de la Muela}, L.~M. Santos, and A.~M. Gonzalez, ``Stochastic joint optimization of wind generation and pumped-storage units in an electricity market,'' \emph{IEEE Transaction on Power Systems}, vol.~23, no.~2, pp. 460--468, 2008.

\bibitem{Li2005}
T.~Li and M.~Shahidehpour, ``Price-based unit commitment: {A} case of {Lagrangian} relaxation versus mixed integer programming,'' \emph{IEEE Transaction on Power Systems}, vol.~20, no.~4, pp. 2015--2025, 2005.

\bibitem{Giacomoni}
A.~Giacomoni, Q.~Gu, and B.~Gisin, ``Optimizing hydroelectric pumped storage in {PJM}'s day-ahead energy market,'' in \emph{{FERC Technical Conf. (Federal Energy Regulatory Commission}, {Washington}, {DC})}, 2020.

\bibitem{Jiang}
R.~Jiang, J.~Wang, and Y.~Guan, ``Robust unit commitment with wind power and pumped storage hydro,'' \emph{IEEE Transaction on Power Systems}, vol.~27, no.~2, pp. 800--810, 2012.

\bibitem{Khodayar}
M.~E. Khodayar, M.~Shahidehpour, and L.~Wu, ``Enhancing the dispatchability of variable wind generation by coordination with pumped-storage hydro units in stochastic power systems,'' \emph{IEEE Transaction on Power Systems}, vol.~28, no.~3, pp. 2808--2818, 2013.

\bibitem{Li2016}
N.~Li and K.~W. Hedman, ``Enhanced pumped hydro storage utilization using policy functions,'' \emph{IEEE Transaction on Power Systems}, vol.~32, no.~2, pp. 1089--1102, 2016.

\bibitem{Huang2022}
B.~Huang, Y.~Chen, and R.~Baldick, ``A configuration based pumped storage hydro model in the {MISO} day-ahead market,'' \emph{IEEE Transaction on Power Systems}, vol.~37, no.~1, pp. 132--141, 2022.

\bibitem{Baldick}
R.~Baldick, Y.~Chen, and B.~Huang, ``Optimization formulations for storage devices with disjoint operating modes,'' \emph{Operations Research}, vol.~71, no.~6, pp. 1978--1996, 2023.

\bibitem{Friedman}
E.~J. Friedman, ``Fundamental domains for integer programs with symmetries,'' in \emph{International Conference on Combinatorial Optimization and Applications (COCOA)}, 2007, pp. 146--153.

\bibitem{Liberti2012}
L.~Liberti, ``Reformulations in mathematical programming: automatic symmetry detection and exploitation,'' \emph{Math. Program.}, vol. 131, pp. 273--304, 2012.

\bibitem{Liberti2014}
L.~Liberti and J.~Ostrowski, ``Stabilizer-based symmetry breaking constraints for mathematical programs,'' \emph{J. Glob. Optim.}, vol.~60, pp. 183--194, 2014.

\bibitem{Ostrowski2011}
J.~Ostrowski, J.~Linderoth, F.~Rossi, and S.~Smriglio, ``Orbital branching,'' \emph{Math. Program.}, vol. 126, pp. 147--178, 2011.

\bibitem{Ostrowski2015}
J.~Ostrowski, M.~F. Anjos, and A.~Vannelli, ``Modified orbital branching for structured symmetry with an application to unit commitment,'' \emph{Math. Program.}, vol. 150, pp. 99--129, 2015.

\bibitem{Knueven2017}
B.~Knueven, J.~Ostrowski, and J.~P. Watson, ``Exploiting identical generators in unit commitment,'' \emph{IEEE Transaction on Power Systems}, vol.~33, no.~4, pp. 4496--4507, 2017.

\bibitem{Ostrowski2012}
J.~Ostrowski, M.~F. Anjos, and A.~Vannelli, ``Tight mixed integer linear programming formulations for the unit commitment problem,'' \emph{IEEE Transaction on Power Systems}, vol.~27, no.~1, pp. 39--46, 2012.

\bibitem{Espana2013}
G.~{Morales-Espana}, J.~M. Latorre, and A.~Ramos, ``Tight and compact {MILP} formulation for the thermal unit commitment problem,'' \emph{IEEE Transaction on Power Systems}, vol.~28, no.~4, pp. 4897--4908, 2013.

\bibitem{Gentile}
C.~Gentile, G.~{Morales-Espana}, and A.~Ramos, ``A tight {MIP} formulation of the unit commitment problem with start-up and shut-down constraints,'' \emph{EURO J. Comput. Optim.}, vol.~5, no. 1/2, pp. 177--201, 2017.

\bibitem{Espana2015}
G.~{Morales-Espana}, C.~Gentile, and A.~Ramos, ``Tight {MIP} formulations of the power-based unit commitment problem,'' \emph{OR Spectrum}, vol.~37, pp. 929--950, 2012.

\bibitem{Knueven}
B.~Knueven, J.~Ostrowski, and J.~Wang, ``The ramping polytope and cut generation for the unit commitment problem,'' \emph{INFORMS J. Comput.}, vol.~30, no.~4, pp. 739--749, 2018.

\bibitem{Bacci}
T.~Bacci, A.~Frangioni, C.~Gentile, and K.~{Tavlaridis-Gyparakis}, ``New mixed-integer nonlinear programming formulations for the unit commitment problems with ramping constraints,'' \emph{Operations Research}, 2023.

\bibitem{Rajan}
D.~Rajan and S.~Takriti, ``Minimum up/down polytopes of the unit commitment problem with start-up costs,'' in \emph{{Report RC23628}}.\hskip 1em plus 0.5em minus 0.4em\relax {IBM, Armonk, NY}, 2005.

\bibitem{Reddy}
K.~S. Reddy, L.~Panwar, B.~K. Panigrahi, and R.~Kumar, ``Binary whale optimization algorithm: a new metaheuristic approach for profit-based unit commitment problems in competitive electricity markets,'' \emph{Engineering Optimization}, vol.~51, no.~3, pp. 369--389, 2018.

\bibitem{Shiina}
T.~Shiina and I.~Watanabe, ``Lagrangian relaxation method for price-based unit commitment problem,'' \emph{Engineering Optimization}, vol.~36, no.~6, pp. 705--719, 2004.

\bibitem{Sen}
S.~Sen and D.~Kothari, ``An equivalencing technique for solving the largescale thermal unit commitment problem,'' in \emph{{The Next Generation of Electric Power Unit Commitment Models}}, 2002, pp. 211--225.

\bibitem{Shortt}
A.~Shortt and M.~{O'Malley}, ``Impact of variable generation in generation resource planning models,'' in \emph{{Proc. IEEE Power Energy Soc. Gen. Meeting}}, 2010, pp. 1--6.

\bibitem{Palmintier}
B.~Palmintier and M.~Webster, ``Impact of unit commitment constraints on generation expansion planning with renewables,'' in \emph{{Proc. IEEE Power \& Energy Society General Meeting}}, 2011, pp. 1--7.

\end{thebibliography}
\vfill
\end{document}